\numberwithin{equation}{section}
\theoremstyle{plain}
	\newtheorem{theorem}{Theorem}[section]
	\newtheorem{lemma}[theorem]{Lemma}
\theoremstyle{definition}
	\newtheorem{definition}[theorem]{Definition}
	\newtheorem{example}[theorem]{Example}
	\newtheorem{open.problem}[theorem]{Open Problem}
\newcommand{\N}{\mathbb{N}}
\newcommand{\R}{\mathbb{R}}
\newcommand{\eps}{\varepsilon}
\newcommand{\de}{\partial}
\newcommand{\Haus}{\mathcal{H}}
\DeclareMathOperator{\dist}{dist}
\DeclareMathOperator{\diam}{diam}
\DeclareMathOperator{\co}{co}
\DeclarePairedDelimiter{\scalar}{<}{>}                                     
\DeclarePairedDelimiter{\set}{\{}{\}}
\DeclarePairedDelimiter{\ceil}{\lceil}{\rceil}
\mathchardef\ordinarycolon\mathcode`\:
\begin{document}

\title{On the convex components of a set in $\R^n$}

\author[F.~Giannetti]{Flavia Giannetti}
\address[F.~Giannetti]{Dipartimento di Matematica ed Applicazioni ``R. Caccioppoli'',
Università degli Studi di Napoli ``Federico II'', Via Cintia, 80126 Napoli, Italy}
\email{giannett@unina.it}

\author[G.~Stefani]{Giorgio Stefani}
\address[G.~Stefani]{Scuola Internazionale Superiore di Studi Avanzati (SISSA), via Bonomea 265, 34136 Trieste, Italy}
\email{giorgio.stefani.math@gmail.com}

\date{\today}

\keywords{Convex body, convex component, monotonicity of perimeter, Hausdorff distance.}

\subjclass[2020]{Primary 52A20; Secondary 52A40.}

\thanks{
\textit{Acknowledgements}.
The authors are  members of INdAM-GNAMPA. The first author
was partially supported by Università degli Studi di Napoli Federico II FRA Project 2020 \textit{Regolarità per minimi di funzionali ampiamente degeneri} (Project code: 000022) and by the INdAM--GNAMPA 2022 Project \textit{Enhancement e segmentazione immagini mediante operatori tipo campionamento e
metodi variazionali}, codice CUP\_E55\-F22\-00\-02\-70\-001.
The second author was partially supported by the ERC Starting Grant 676675 FLIRT -- \textit{Fluid Flows and Irregular Transport}, by the INdAM--GNAMPA 2020 Project \textit{Problemi isoperimetrici con anisotropie} (n.\ prot.\ U-UFMBAZ-2020-000798 15-04-2020), by the INdAM--GNAMPA 2022 Project \textit{Analisi geometrica in strutture subriemanniane}, codice CUP\_E55\-F22\-00\-02\-70\-001, and has received funding from the European Research Council (ERC) under the European Union’s Horizon 2020 research and innovation program (grant agreement No.~945655).
} 

\begin{abstract}
We prove a lower bound on the number of the convex components of a compact set with non-empty interior in $\R^n$ for all $n\ge2$.
Our result generalizes and improves  the inequalities previously obtained in~\cites{CGLP19,LL08}.
\end{abstract}

\maketitle

\section{Introduction}

\subsection{Convex components}
Let $n\ge2$. 
Let us consider a compact set $E\subset\R^n$ with non-empty interior  and a decomposition of the form
\begin{equation}
\label{eq:convex_decomp}
E=\bigcup_{i=1}^k E_i,
\end{equation}
where $k\in\N$ and $E_1,\dots,E_k$ are the \emph{convex components} of $E$, i.e., compact and convex sets with non-empty interior.
Since, in general, such a decomposition is obviously not unique, it is interesting to give a lower bound on the minimal number $k_{\min}(E)\in\N$ of the convex components of $E$. 
By definition, $k_{\min}(E)=1$ if and only if $E$ is a convex body.
Moreover, we can note that $k_{\min}(E)\ge c(E)$, where $c(E)\in\N$ is the number of connected components of $E$. Indeed, any convex component of $E$ must lay inside some connected component of $E$. Therefore, without loss of generality, in the following we will always assume that $E$ is a connected set.

The first lower bound on the minimal number of convex components was given in~\cite{LL08}*{Theorem~1.1}, where the authors proved that 
\begin{equation}\label{eq:leonetti_deficit}
k_{\min}(E)
\ge
\ceil*{
\frac{\Haus^{n-1}(\de E)}{\Haus^{n-1}(\de(\co(E)))}
},
\end{equation}
where $\ceil*{x}\in\mathbb Z$ denotes the upper integer part of $x\in\R$.
Here and in the following, for all $s\ge0$ we let $\Haus^s$ be the $s$-dimensional Hausdorff measure (in particular, $\Haus^0$ is the counting measure). 
Moreover, we let $\de E$ be the boundary of~$E$ and $\co(E)$ be the convex hull of~$E$.
Note that, since $E$ admits at least one decomposition as in~\eqref{eq:convex_decomp}, $\Haus^{n-1}(\de E)$ and $\Haus^{n-1}(\de(\co(E)))$ are two finite and strictly positive real numbers, see~\cite{LL08}, so that the right-hand side in~\eqref{eq:leonetti_deficit} is well defined.

In the subsequent paper \cite{CGLP19}, the bound in~\eqref{eq:leonetti_deficit} has been improved in the case $n=2$ in the sense explained in \cref{ss:improvement} for a class of compact sets $E\subset\R^2$.  
In the same spirit, our aim  is to provide a refined bound of the number $k_{\min}(E)$ in any dimension $n\ge 2$. 
We stress that the estimate we are going to obtain also improves the result in~\cite{CGLP19}.

\subsection{Monotonicity of perimeter}

The proof of~\eqref{eq:leonetti_deficit} is based on the following monotonicity property of the perimeter: if $A\subset B\subset\R^n$ are two convex bodies, then 
\begin{equation}\label{eq:monotonicity}
\Haus^{n-1}(\de A)\le\Haus^{n-1}(\de B).
\end{equation}

Inequality~\eqref{eq:monotonicity} is well known since the ancient Greek (Archimedes himself took it as a postulate in his work on the sphere and the cylinder, see~\cite{A04}*{p.~36}) and can be proved in many different ways, for example by exploiting either the Cauchy formula for the area surface or the monotonicity property of mixed volumes, \cite{BF87}*{\S7}, by using the Lipschitz property of the projection on a convex closed set, \cite{BFK95}*{Lemma~2.4}, or finally by observing that the perimeter is decreased under intersection with half-spaces, \cite{M12}*{Exercise 15.13}.
Actually, a deep inspection of the proof given in \cite{BFK95} shows that  the convexity of $B$ is not needed.

Anyway, in \cite{LL08},  a quantitative improvement of  formula~\eqref{eq:monotonicity} has been obtained if~$A$ and~$B$ are both convex bodies. 
Moreover, lower bounds for the perimeter deficit 
\begin{equation*}
\delta(B,A):=\Haus^{n-1}(\de B)-\Haus^{n-1}(\de A)
\end{equation*}
with respect to the Hausdorff distance of~$A$ and~$B$ have been established for $n=2$ in~\cites{LL08,CGLP15}, for $n=3$ in~\cite{CGLP16} and finally for all~$n\ge2$ in~\cite{S18}.

In particular,  if $A\subset B$ are two convex bodies in $\R^n$, with~$n\ge2$, then
\begin{equation}\label{eq:monotonicity_quant}
\Haus^{n-1}(\de A)+\frac{\omega_{n-1}r^{n-2}h^2}{r+\sqrt{r^2+h^2}}\le\Haus^{n-1}(\de B),
\end{equation}
where 
$\omega_{n}= \frac{\pi^{n/2}}{\Gamma(\frac{n}{2}+1)}$ denotes the  volume of the  unit ball in $\mathbb R^n$, $h=h(A,B)$ is the Hausdorff distance of $A$ and $B$ and
\begin{equation*}
r=\sqrt[n-1]{\frac{\Haus^{n-1}(B\cap\de H)}{\omega_{n-1}}}, 
\qquad 
H=\set{x\in\R^n : \scalar*{b-a,x-a}\le0},
\end{equation*}
with $a\in A$ and $b\in B$ such that $|a-b|=h(A,B)$, see~\cite{S18}*{Corollary~1.2} and \cref{fig:monotonicity}. 

\begin{figure}
\begin{tikzpicture}
\fill [gray, opacity=0.2](0.4,1.4) -- (0.4,-1.4) -- (1.5,-1.2) -- (1.5,1.6) -- (0.4,1.4);
\fill [gray, opacity=0.2] (0.5,0) ellipse (2 and 1.2);
\fill [red, opacity=0.5] (0,0) circle (1);
\draw (0,0) circle (1);
\draw [dashed] (1,0) -- (2.5,0);
\draw (1.75,0) node [above] {\footnotesize $h$};
\draw [dashed] (-1,0) arc (180:360:1 and 0.25);
\draw [dashed] (-1,0) arc (180:360:1 and -0.25);
\draw (0.5,0) ellipse (2 and 1.2);
\draw [dashed] (-1.5,0) arc (180:360:2 and -0.5);
\draw (-1.5,0) arc (180:360:2 and 0.5);
\draw [dashed] (0.4,1.4) -- (0.4,-1.4) -- (1.5,-1.2) -- (1.5,1.6) -- (0.4,1.4);
\draw [dashed] (1,0) ellipse (.3 and 1.15);
\draw (2,-1.2) node {\footnotesize $\de H$};
\draw (-0.2,0.5) node {\footnotesize $A$};
\draw (2.5,0.8) node {\footnotesize $B$};
\draw (1,0.2) node {\footnotesize $a$};
\draw (2.7,0) node {\footnotesize $b$};
\draw [dashed] (1,0) -- (1.2,.88);
\draw (1.35,.9) node {\footnotesize $r$};
\begin{scope}[shift={(4,-1.25)}]
\fill [gray, opacity=0.2] (3,2.5) arc (-90:90:-0.5 and -1.25) -- (5,1.25) -- (3.1,2.5);
\fill [red, opacity=0.5] (3,2.5) -- (1,2.5) arc (-90:90:-0.5 and -1.25) -- (3,0) arc (-90:90:-0.5 and 1.25);
\draw (1,0) arc (-90:90:-0.5 and 1.25);
\draw [dashed] (1,0) arc (-90:90:0.5 and 1.25);
\draw (1,2.5) -- (3.05,2.5);
\draw (1,0) -- (3,0);
\draw (3,0) arc (-90:90:-0.5 and 1.25);
\draw [dashed] (3,0) arc (-90:90:0.5 and 1.25);
\draw (3.1,0) -- (5,1.25);
\draw (3.1,2.5) -- (5,1.25);
\draw [dashed] (3,1.25) -- (5,1.25);
\draw (4,1.25) node [above] {\footnotesize $h$};
\draw [dashed] (3,1.25) -- (3,2.5);
\draw (3,1.8) node[right] {\footnotesize $r$};
\draw (3,1.25) node [below] {\footnotesize $a$};
\draw (5,1.25) node [below] {\footnotesize $b$};
\draw (1.8,1.8) node {\footnotesize $A$};
\draw (4.5,2.2) node {\footnotesize $B$};
\end{scope}
\end{tikzpicture}
\caption{The setting of the estimate~\eqref{eq:monotonicity_quant} (on the left) with an example of equality (on the right).}
\label{fig:monotonicity}	
\end{figure}
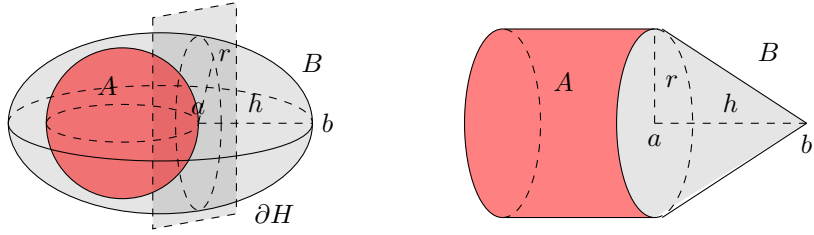

Actually, the main result of~\cite{S18} provides a quantitative lower bound for the more general deficit 
$$\delta_\Phi(B,A):=P_\Phi(B)-P_\Phi(A),$$
where $P_\Phi$ stands for the \emph{anisotropic (Wulff) perimeter} associated to the positively $1$-homogeneous convex function $\Phi\colon\R^n\to[0,+\infty)$.

We conclude this subsection by underlying that  the quantitative estimates of the perimeter deficit $\delta(B,A)$ obtained in \cites{CGLP15,CGLP16,S18} are sharp in the sense that they hold as equalities in some cases, see \cref{fig:monotonicity}.

\subsection{Improvement \texorpdfstring{of~\eqref{eq:leonetti_deficit}}{of the estimate} in the planar case} \label{ss:improvement}
 
Taking advantage of the quantitative estimate~\eqref{eq:monotonicity_quant} in the planar case proved in~\cite{CGLP16}, in the more recent paper~\cite{CGLP19} the authors were able to improve the lower bound~\eqref{eq:leonetti_deficit} for $n=2$ for a class of compact sets $E\subset\R^2$ (see also \cite{Gia}). 
Precisely, if for a bounded closed $\emptyset\neq E\subset\R^2$ one can find $q\in \N_0$, $p\in\N$ and $\alpha\in(0,1)$ such that any decomposition of the form~\eqref{eq:convex_decomp} admits $p$ convex components $E_{i_1},\dots,E_{i_p}$ such that
\begin{equation}
\label{eq:ass_diam_2}
h(E_{i_j},\co(E))
\ge
\alpha\diam(\co(E))
\quad
\text{for all}\ j=1,\dots,p
\end{equation} 
and 
\begin{equation}
\label{eq:numero_q}
q\mathcal{H}^{1}(\partial (\co(E)))-\Haus^{1}(\partial E)<\frac{4\alpha^2p}{1+\sqrt{1+4\alpha^2}}\,\diam(\co(E)),
\end{equation}
then 
\begin{equation}\label{stima:n=2}
k_{\min}(E)
\ge q+1.
\end{equation}
Inequality~\eqref{stima:n=2} is sharp, in the sense that it holds as an equality in some cases. 
Moreover, it improves the previous lower bound~\eqref{eq:leonetti_deficit} in the case $n=2$. 
Indeed, in \cite{CGLP19} the authors exhibited an example for which~\eqref{eq:leonetti_deficit} gives a strict inequality while, on the contrary, \eqref{stima:n=2} yields an equality.

The idea behind the inequality~\eqref{stima:n=2} essentially relies on two ingredients.
 On the one hand, the use of the refined  estimate of the deficit obtained in \cite{CGLP15} in place of the monotonicity property of the perimeter  \eqref{eq:monotonicity}.
On the other hand, the idea of assuming~\eqref{eq:ass_diam_2} for a finite number $p$ of the components, according to the observation that some planar sets $E\subset\R^2$  have some convex components whose Hausdorff distance from the convex hull $\co(E)$ is comparable to the diameter of $\co(E)$ itself, independently of the chosen decomposition. 

By a careful inspection of the proof of \eqref{stima:n=2}, one realizes that
\begin{equation*}
k_{\min}(E)
\ge 
\ceil*{
\frac{
\Haus^1(\de E)+\sum_{j=1}^p\tfrac{4h(E_{i_j},\co(E))^2}{\diam(\co(E))+\sqrt{\diam(\co(E))^2+4h(E_{i_j},\co(E))^2}}
}{
\Haus^1(\de(\co(E)))
}}
\end{equation*}
and since the function
\begin{equation*}
r\mapsto\frac{4h}{r+\sqrt{r^2+4h^2}}
\end{equation*}
is monotone for~$r>0$, the assumption \eqref{eq:ass_diam_2} yields
\begin{equation}
\label{eq:k_min_quantit_2}
k_{\min}(E)
\ge 
\ceil*{
\frac{
\Haus^1(\de E)+\tfrac{4\alpha^2p}{1+\sqrt{1+4\alpha^2}}\,\diam(\co(E))
}{
\Haus^1(\de(\co(E)))
}
},
\end{equation}
which is precisely~\eqref{stima:n=2}, according to the best possible choice of $q\in\N_0$ in~\eqref{eq:numero_q}.

\subsection{Main result} 

The aim of the present paper is to improve the inequality~\eqref{eq:leonetti_deficit} for all $n\ge2$ exploiting the quantitative monotonicity of the perimeter~\eqref{eq:monotonicity_quant} proved in~\cite{S18}, thus generalizing inequality~\eqref{eq:k_min_quantit_2} to higher dimensions.
Before stating our main result, we need to introduce the following notation.

\begin{definition}[Maximal sectional radius]
\label{def:max_sec_radius}
Let $n\ge2$ and let $E\subset\R^n$ be a compact set with non-empty interior.
Given a direction $\nu\in\R^n$, we let
\begin{equation*}
\rho_\nu(E)
=
\sup\set*{\sqrt[n-1]{\frac{\Haus^{n-1}(E\cap(t\nu+\de H_\nu))}{\omega_{n-1}}} : t\in\R}
\end{equation*}
be the \emph{maximal sectional radius of $E$ in the direction $\nu$}, where 
$H_\nu=\set*{x\in\R^n : \scalar*{x,\nu}\le0}$.
Note that, naturally, $\rho_{-\nu}(E)=\rho_{\nu}(E)$ for all $\nu\in\R^n$.
\end{definition}

With the above definition in force, our main result reads as follows.

\begin{theorem}\label{res:main}
Let $n\ge2$ and let $E\subset\R^n$ be a compact set with non-empty interior. 
Assume that there exist $p\in\N$, $\alpha\in(0,1)$ and $\beta\in[0,1]$ with the following properties. 
For every family $E_1,\dots,E_k$, with $k\in\N$, of convex bodies with non-empty interior such that $E=\bigcup_{i=1}^k E_i$, we can find a subfamily of~$p$ convex bodies $E_{i_1},\dots,E_{i_p}$ and a family of corresponding~$p$ closed half-spaces such that $E_{i_j}\subset H_{i_j}$, 
\begin{equation}
\label{eq:ass_diam}
h(\co(E),\co(E)\cap H_{i_j})
\ge
\alpha\diam(\co(E))
\end{equation}
and 
\begin{equation}
\label{eq:ass_section}
\Haus^{n-1}(\co(E)\cap\de H_{i_j})\ge
\beta\omega_{n-1}\rho_{\nu_{i_j}}(\co(E))^{n-1}
\end{equation}
for all $j=1,\dots,p$, 
where $\nu_{i_j}=a_{i_j}-b_{i_j}$, with $a_{i_j}\in\co(E)\cap H_{i_j}$ and $b_{i_j}\in\co(E)$ such that $h(\co(E)\cap H_{i_j},\co(E))
=|a_{i_j}-b_{i_j}|$ and $H_{i_j}=\set*{x\in\R^n : \scalar*{b_{i_j}-a_{i_j},x-a_{i_j}}\le0}$.
Then
\begin{equation}
\label{eq:main_k_est}
k_{\min}(E)
\ge
\ceil*{ 
\frac{
\Haus^{n-1}(\de E)
+
\omega_{n-1}\alpha^2\beta^{\frac{n-2}{n-1}}
\displaystyle\sum_{j=1}^p
\tfrac{\rho_{\nu_{i_j}}(\co(E))^{n-2}\diam(\co(E))^2}{\rho_{\nu_{i_j}}(\co(E))+\sqrt{\rho_{\nu_{i_j}}(\co(E))^2+\alpha^2\diam(\co(E))^2}}
}
{\Haus^{n-1}(\de(\co(E)))}
}.
\end{equation}
\end{theorem}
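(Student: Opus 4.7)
The plan is to start from an arbitrary convex decomposition $E=\bigcup_{i=1}^k E_i$, estimate the perimeter of each $E_i$ by that of $\co(E)$ (with the quantitative improvement \eqref{eq:monotonicity_quant} for the $p$ selected components), and sum over $i$. Let $E_{i_1},\dots,E_{i_p}$ and $H_{i_1},\dots,H_{i_p}$ be provided by the hypothesis, and set $A_j=\co(E)\cap H_{i_j}$. Since $E_{i_j}\subset A_j\subset\co(E)$ and $A_j$ is a convex body, the classical monotonicity \eqref{eq:monotonicity} gives $\Haus^{n-1}(\de E_{i_j})\le\Haus^{n-1}(\de A_j)$. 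The crucial observation is that the half-space $H_{i_j}$, defined through the Hausdorff-distance realizers $a_{i_j},b_{i_j}$ of the pair $A_j\subset\co(E)$, is exactly the one appearing in \eqref{eq:monotonicity_quant} when applied to this pair; hence
\begin{equation*}
\Haus^{n-1}(\de A_j) + \delta_j \le \Haus^{n-1}(\de \co(E)),
\quad\text{where}\quad
\delta_j = \frac{\omega_{n-1}\,r_j^{n-2}\,h_j^2}{r_j + \sqrt{r_j^2 + h_j^2}},
\end{equation*}
with $h_j=h(A_j,\co(E))$ and $r_j=(\Haus^{n-1}(\co(E)\cap\de H_{i_j})/\omega_{n-1})^{1/(n-1)}$.

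Next I bound $\delta_j$ from below. Rewriting $\delta_j=\omega_{n-1}\,r_j^{n-2}\bigl(\sqrt{r_j^2+h_j^2}-r_j\bigr)$, the factor $\sqrt{r^2+h^2}-r$ is increasing in $h$ and decreasing in $r$; so from $h_j\ge\alpha\diam(\co(E))$ (by \eqref{eq:ass_diam}) and $r_j\le\rho_{\nu_{i_j}}(\co(E))$ (from \cref{def:max_sec_radius}) one obtains
\begin{equation*}
\sqrt{r_j^2+h_j^2}-r_j\ge\sqrt{\rho_{\nu_{i_j}}(\co(E))^2+\alpha^2\diam(\co(E))^2}-\rho_{\nu_{i_j}}(\co(E)).
\end{equation*}
For the prefactor, \eqref{eq:ass_section} gives $r_j^{n-1}\ge\beta\,\rho_{\nu_{i_j}}(\co(E))^{n-1}$; dividing by $r_j\le\rho_{\nu_{i_j}}(\co(E))$ yields $r_j^{n-2}\ge\beta\,\rho_{\nu_{i_j}}(\co(E))^{n-2}$, and since $\beta\in[0,1]$ and $n-2\ge1$ when $n\ge3$ (the case $n=2$ being trivial), one has $\beta\ge\beta^{n-2}$. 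Rationalizing $\sqrt{\rho^2+\alpha^2 D^2}-\rho=\alpha^2 D^2/(\rho+\sqrt{\rho^2+\alpha^2 D^2})$ with $\rho=\rho_{\nu_{i_j}}(\co(E))$ and $D=\diam(\co(E))$ then produces the term multiplying $\omega_{n-1}\alpha^2\beta^{n-2}$ in the sum in \eqref{eq:main_k_est}.

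For the remaining $k-p$ components, \eqref{eq:monotonicity} gives $\Haus^{n-1}(\de E_i)\le\Haus^{n-1}(\de\co(E))$. Since $\de E\subset\bigcup_i\de E_i$, summing the above inequalities over all $i$ yields
\begin{equation*}
\Haus^{n-1}(\de E) \le \sum_{i=1}^k \Haus^{n-1}(\de E_i)\le k\,\Haus^{n-1}(\de\co(E)) - \sum_{j=1}^p\delta_j,
\end{equation*}
so that $k$ is at least the quotient on the right-hand side of \eqref{eq:main_k_est}; since the decomposition was arbitrary and $k\in\N$, taking the ceiling delivers the stated lower bound on $k_{\min}(E)$. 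The main obstacle is the monotonicity analysis of $\delta_j$: tracking the two-variable dependence of $r^{n-2}(\sqrt{r^2+h^2}-r)$ through the chain of bounds $r_j^{n-2}\ge\beta\rho^{n-2}\ge\beta^{n-2}\rho^{n-2}$ is where the clean constant $\alpha^2\beta^{n-2}$ emerges.
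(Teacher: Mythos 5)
Your proposal is correct and follows essentially the same strategy as the paper's proof: sum the boundary measures over an arbitrary decomposition, pass from $E_{i_j}$ to $\co(E)\cap H_{i_j}$ by monotonicity, apply the quantitative estimate~\eqref{eq:monotonicity_quant} to the pair $\co(E)\cap H_{i_j}\subset\co(E)$, and convert $r_j$ and $h_j$ into $\rho_{\nu_{i_j}}(\co(E))$ and $\alpha\diam(\co(E))$ via~\eqref{eq:ass_diam} and~\eqref{eq:ass_section}. Your handling of the deficit via the rewriting $r^{n-2}\bigl(\sqrt{r^2+h^2}-r\bigr)$ and its two-variable monotonicity is a cosmetic variant of the paper's direct substitution of the bounds $\beta\rho\le r_j\le\rho$ into numerator and denominator (and your intermediate bound $r_j^{n-2}\ge\beta\,\rho^{n-2}$ is in fact slightly sharper before being weakened to $\beta^{n-2}$), so no substantive difference remains.
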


\subsection{Comments}

First of all, let us remark that inequality~\eqref{eq:main_k_est} improves the previous lower bound~\eqref{eq:leonetti_deficit}.
Indeed, inequality~\eqref{eq:main_k_est} clearly reduces to the lower bound~\eqref{eq:leonetti_deficit} as soon as one drops the additional assumptions on each of all possible decompositions of the form~\eqref{eq:convex_decomp}.
Moreover, inequality~\eqref{eq:main_k_est} holds as an equality in some cases for which~\eqref{eq:leonetti_deficit} gives  a strict inequality only.
We will give some explicit examples in \cref{sec:examples} below.

Concerning the statement of \cref{res:main}, it is worth noting that the assumption~\eqref{eq:ass_diam} corresponds to~\eqref{eq:ass_diam_2}, while the additional assumption~\eqref{eq:ass_section} comes into play for $n\ge3$ only.

In fact, if we take $n=2$ in \cref{res:main}, then the inequality~\eqref{eq:main_k_est} becomes
\begin{equation}
\label{eq:main_k_est_2}
k_{\min}(E)
\ge
\ceil*{ 
\frac{
\Haus^{1}(\de E)
+
2\alpha^2
\displaystyle\sum_{j=1}^p
\tfrac{
\diam(\co(E))^2
}
{
\rho_{\nu_{i_j}}(\co(E))+\sqrt{\rho_{\nu_{i_j}}(\co(E))^2
+
\alpha^2\diam(\co(E))^2}}
}
{\Haus^{1}(\de(\co(E)))}
}
\end{equation} 
(as it is customary, we use the convention $0^0=1$) and the parameter $\beta\in[0,1]$ provided by~\eqref{eq:ass_section} plays no role in the final estimate~\eqref{eq:main_k_est_2}.
Consequently, the additional assumption in~\eqref{eq:ass_section} can be dropped and one just needs to choose the closed half-plane $H_{i_j}\subset\R^2$ in such a way that
\begin{equation*}
h(\co(E)\cap H_{i_j},\co(E))
=
h(E_{i_j},\co(E))
\quad
\text{for all}\ j=1,\dots,p,
\end{equation*}
which is always possible by the definition of the Hausdorff distance and the convexity of each component~$E_{i_j}$.

Concerning the higher dimensional case $n\ge3$, a control like the one in~\eqref{eq:ass_section} seems reasonable to be assumed.
Indeed, as one may realize by looking at the inequality~\eqref{eq:leonetti_deficit}, the set $E\subset\R^n$ may have a convex component very lengthened in one specific direction $\nu\in\mathbb S^{n-1}$ which does not give a substantial contribution to the total perimeter of $E$ but, nevertheless,  that strongly affects the total perimeter of the convex hull $\co(E)$.

In addition, we observe that the effectiveness of the lower bound~\eqref{eq:leonetti_deficit} drastically changes when passing from the planar case $n=2$ to the non-planar case $n\ge3$.
Indeed, if $E\subset\R^2$ is a non-convex connected compact set admitting at least one decomposition like~\eqref{eq:convex_decomp}, then
\begin{equation*}
\Haus^1(\de(\co(E)))
<
\Haus^1(\de E),
\end{equation*} 
correctly implying that $k_{\min}(E)\ge2$.
As a matter of fact, in the planar case $n=2$, the examples given in~\cite{CGLP19} provide the precise value of $k_{\min}(E)$ for $q\ge2$, since if $q=1$ both inequalities~\eqref{eq:leonetti_deficit} and~\eqref{stima:n=2} allow to conclude that $k_{\min}(E)\ge2$ only.
However, as we are going to show with some examples in \cref{sec:examples} below, there are non-convex connected compact sets $E\subset\R^n$, with $n\ge3$, such that
\begin{equation*}
\Haus^{n-1}(\de(\co(E)))
\ge
\Haus^{n-1}(\de E),
\end{equation*}  
so that~\eqref{eq:leonetti_deficit} only implies that $k_{\min}(E)\ge1$.
Nevertheless, the inequality~\eqref{eq:main_k_est} given by \cref{res:main} allows us to recover the correct value of $k_{\min}(E)$ in these examples.

Moreover,  let us observe that, in the planar case $n=2$,  one can trivially bound 
\begin{equation}
\label{eq:max_radius_diam_est}
\rho_\nu(\co(E))\le\frac{\diam(\co(E))}2
\quad
\text{for all}\ 
\nu\in\mathbb S^1,	
\end{equation}
 so that inequality~\eqref{eq:main_k_est_2} gives back 
\begin{align*}
k_{\min}(E)
&\ge 
\ceil*{
\frac{
\Haus^{1}(\de E)
+
\tfrac{
2p\alpha^2
\diam(\co(E))^2
}
{
\frac{\diam(\co(E))}{2}+\sqrt{\frac{\diam(\co(E))^2}{4}
+
\alpha^2\diam(\co(E))^2}}
}
{\Haus^{1}(\de(\co(E)))}
}
\\
&=
\ceil*{
\frac{
\Haus^{1}(\de E)
+
\tfrac{
4\alpha^2 p
}{
1+\sqrt{1+4\alpha^2}
}\diam(\co(E))
}{
\Haus^{1}(\de(\co(E)))
}
},
\end{align*}
that is the estimate in~\eqref{eq:k_min_quantit_2}.
Actually, because of the fact that the upper bound~\eqref{eq:max_radius_diam_est} can be too rough in general, the inequality~\eqref{eq:main_k_est_2} given by our \cref{res:main} is more precise than the one in~\eqref{eq:k_min_quantit_2}, as we are going to show in \cref{exa:impro_2} below.

Last but not least, we remark that both the lower bounds provided by the estimates~\eqref{eq:leonetti_deficit} and~\eqref{eq:main_k_est} are not stable under small modifications of the compact set $E\subset\R^n$, $n\ge2$. In fact, the value of $k_{\min}(E)$ may be changed without substantially altering neither the perimeters of $E$ and of its convex hull $\co(E)$, nor all the other geometrical quantities involved in~\eqref{eq:main_k_est}, for example by gluing some additional tiny convex components to the original set $E$.

\subsection{Organization of the paper}
The rest of the paper is organized as follows.

In \cref{sec:proof} we  detail the proof of our main result \cref{res:main}. 
Our approach essentially follows the strategy of~\cite{CGLP19}, up to some minor modifications needed in order to exploit the quantitative estimate~\eqref{eq:monotonicity_quant} in conjunction with the notion of maximal radius introduced in \cref{def:max_sec_radius}. 

In \cref{sec:examples} we provide some examples proving the effectiveness of our main result with respect to either the general inequality~\eqref{eq:leonetti_deficit} or its improvement~\eqref{eq:k_min_quantit_2} in the planar case, as already observed, due to the fact that $\rho_\nu(\co(E))\le\frac{\diam(\co(E))}2$
for all $\nu\in\mathbb S^1$.

\section{Proof of \texorpdfstring{\cref{res:main}}{the main result}}
\label{sec:proof}

We recall that, if $A\subset B$ are two compact sets in $\R^n$, with $n\ge2$, then the Hausdorff distance $h(A,B)$ between $A$ and $B$ can be written as
\begin{equation*}
h(A,B)
=
\max_{b\in B}\dist(A,b)
=
\max_{b\in B}\min_{a\in A} |a-b|.
\end{equation*}
As above, given $\emptyset\neq A\subset B$ two convex bodies in $\R^n$, with $n\ge2$, we denote by
\begin{equation*}
\delta(B,A)
:=
\Haus^{n-1}(\de B)
-
\Haus^{n-1}(\de A)\ge0
\end{equation*}
the perimeter deficit between $A$ and $B$.

\begin{proof}[Proof of \cref{res:main}]
Since $E$ is compact, its convex hull $\co(E)$ is compact	too, see~\cite{G07}*{Corollary~3.1} for example.
As a consequence, $\Haus^{n-1}(\de(\co(E)))<+\infty$.
Arguing as in~\cite{CGLP19}*{Section~2}, we can estimate
\begin{align*}
\Haus^{n-1}(\de E)
&\le
\Haus^{n-1}\left(\bigcup_{i=1}^k \de E_i\right)
\le
\sum_{i=1}^k\Haus^{n-1}(\de E_i)
\\
&=\
\sum_{j=1}^p\Haus^{n-1}(\de E_{i_j})
+
\sum_{j=p+1}^k\Haus^{n-1}(\de E_{i_j})
\\
&\le
\sum_{j=1}^p\left(\Haus^{n-1}(\de(\co(E)))-\delta(\co(E),E_{i_j})\right)
+
\sum_{j=p+1}^k\Haus^{n-1}(\de(\co(E)))
\\
&=\ 
k\Haus^{n-1}(\de(\co(E)))-\sum_{j=1}^p\delta(\co(E),E_{i_j}),
\end{align*}
so that
\begin{equation*}
\ceil*{
\frac{
\Haus^{n-1}(\de E)
+
\displaystyle
\sum_{j=1}^p\delta(\co(E),E_{i_j})
}
{\Haus^{n-1}(\de(\co(E)))}
}
\le 
k.
\end{equation*}
Now, since $E_{i_j}\subset H_{i_j}$, we observe that 
\begin{equation}
\label{eq:stima_referee}
\begin{split}
\delta(\co(E),E_{i_j})
&=
\Haus^{n-1}(\de(\co(E)))
-
\Haus^{n-1}(\de E_{i_j})\\
&=
\left(\Haus^{n-1}(\de(\co(E)))
-
\Haus^{n-1}(\de(\co(E)\cap H_{i_j}))
\right)
\\
&\quad+
\left(
\Haus^{n-1}(\de(\co(E)\cap H_{i_j}))
-
\Haus^{n-1}(\de E_{i_j})
\right)
\\
&=
\delta(\co(E)\cap H_{i_j},E_{i_j})
+
\delta(\co(E),\co(E)\cap H_{i_j})
\\
&\ge
\delta(\co(E),\co(E)\cap H_{i_j})
\end{split}
\end{equation}
for all $j=1,\dots,p$.
Since $h(\co(E)\cap H_{i_j},\co(E))
=|a_{i_j}-b_{i_j}|$  with $a_{i_j}\in\co(E)\cap H_{i_j}$ and $b_{i_j}\in\co(E)$ such that 
$$H_{i_j}=\set*{x\in\R^n : \scalar*{b_{i_j}-a_{i_j},x-a_{i_j}}\le0},$$ 
we can thus apply~\eqref{eq:monotonicity_quant} to each couple of convex bodies $\co(E)$ and $\co(E)\cap H_{i_j}$, with $j=1,\dots,p$, and get
\begin{equation}
\label{eq:deficit_conv_comp}
\delta(\co(E),\co(E)\cap H_{i_j})
\ge
\frac{\omega_{n-1}r^{n-2}_{i_j}h_{i_j}^2}{r_{i_j}+\sqrt{r_{i_j}^2+h_{i_j}^2}},
\end{equation}
where
\begin{equation*}
h_{i_j}=h(\co(E),\co(E)\cap H_{i_j}),
\qquad
r_{i_j}
=
\sqrt[n-1]{\frac{\Haus^{n-1}(\co(E)\cap\de H_{i_j})}{\omega_{n-1}}}.
\end{equation*}
By~\eqref{eq:ass_section}, we clearly have
\begin{equation}
\label{eq:max_sect_rad_est}
\beta^{\frac1{n-1}}\rho_{\nu_{i_j}}(\co(E))\le r_{i_j}\le\rho_{\nu_{i_j}}(\co(E))
\end{equation}
for all $j=1,\dots,p$. 
Inserting~\eqref{eq:max_sect_rad_est} into~\eqref{eq:deficit_conv_comp}, we immediately obtain that
\begin{align*}
\delta(\co(E),\co(E)\cap H_{i_j})
\ge
\frac{\omega_{n-1}\beta^{\frac{n-2}{n-1}}\rho_{\nu_{i_j}}(\co(E))^{n-2}h_{i_j}^2}{\rho_{\nu_{i_j}}(\co(E))+\sqrt{\rho_{\nu_{i_j}}(\co(E))^2+h_{i_j}^2}}
\end{align*}
for all $j=1,\dots,p$.
Now, for any given $c>0$, the function
\begin{equation*}
s\mapsto\frac{s^2}{c+\sqrt{c+s^2}}
\end{equation*}
is strictly increasing for $s>0$. 
Since $h_{i_j}\ge\alpha\diam(\co(E))$ for all $j=1,\dots,p$ by~\eqref{eq:ass_diam}, thanks to~\eqref{eq:stima_referee} we can finally estimate
\begin{equation*}
\delta(\co(E),E_{i_j})
\ge
\frac{\omega_{n-1}\alpha^2\beta^{\frac{n-2}{n-1}}\rho_{\nu_{i_j}}(\co(E))^{n-2}\diam(\co(E))^2}{\rho_{\nu_{i_j}}(\co(E))+\sqrt{\rho_{\nu_{i_j}}(\co(E))^2+\alpha^2\diam(\co(E))^2}}
\end{equation*}
for all $j=1,\dots,p$.
In conclusion, we get
\begin{align*}
k
&\ge
\ceil*{
\frac{
\Haus^{n-1}(\de E)
+
\displaystyle
\sum_{j=1}^p\delta(E_{i_j},\co(E))
}
{\Haus^{n-1}(\de(\co(E)))}
}
\\
&\ge
\ceil*{
\frac{
\Haus^{n-1}(\de E)
+
\omega_{n-1}\alpha^2\beta^{\frac{n-2}{n-1}}
\displaystyle\sum_{j=1}^p
\tfrac{\rho_{\nu_{i_j}}(\co(E))^{n-2}\diam(\co(E))^2}{\rho_{\nu_{i_j}}(\co(E))+\sqrt{\rho_{\nu_{i_j}}(\co(E))^2+\alpha^2\diam(\co(E))^2}}
}
{\Haus^{n-1}(\de(\co(E)))}
}
\end{align*}
proving~\eqref{eq:main_k_est}.
The proof is thus complete.
\end{proof}

\section{Examples}
\label{sec:examples}

We dedicate the remaining part of the paper to give some explicit examples of compact sets $E\subset\R^n$, $n\ge2$, for which our main result applies.
In each example, we will identify a point $P\in\de E$ and one convex component $E_j$ of $E$ containing~$P$ and we will make a precise choice of parameters in order to satisfy the hypotheses of \cref{res:main}.

\subsection{An example in \texorpdfstring{$\R^2$}{Rˆ2}}

We begin with the following example in $\R^2$ showing that our \cref{res:main} in the planar formulation~\eqref{eq:main_k_est_2}, at least in some cases, provides a strictly better estimate than the one in~\eqref{eq:k_min_quantit_2} previously established in~\cite{CGLP19}.
This example is based on the set $C\subset\R^2$ shown in \cref{fig:C_shape}, which was already considered in~\cite{LL08}*{Example~2.1} and in~\cite{CGLP19}*{Example~3.1}.
The set $C$ depends on two parameters $l>h>0$. In~~\cite{CGLP19}*{Example~3.1}, to make the construction work, it was necessary to assume that $h\in(0,\eps)$ for some $\eps\in(0,l)$ sufficiently small. In our situation, thanks to the refined inequality~\eqref{eq:k_min_quantit_2}, our choice of the parameter $h$ is less restrictive, i.e., we are going to choose $h\in(0,\bar\eps)$ for some $\bar\eps\in(\eps,l)$. As matter of fact, when $h\in(\eps,\bar\eps)$, our inequality~\eqref{eq:k_min_quantit_2} gives the correct value $k_{\min}(C)=3$, while inequality~\eqref{stima:n=2} gives the lower bound $k_{\min}(C)\ge2$ only.

\begin{figure}[H]
\begin{tikzpicture}[scale=0.65]
\fill [gray, opacity=0.2] (0,0) -- (0,3) -- (10,3) -- (10,2) -- (1,2) -- (1,1) -- (10,1) -- (10,0) -- (0,0);
\draw (0,0) -- (0,3) -- (10,3) -- (10,2) -- (1,2) -- (1,1) -- (10,1) -- (10,0) -- (0,0); 
\draw (1,1.5) node {.};
\draw (1,1.5) node [right] {\footnotesize $P$};
\draw [decorate,decoration={brace,amplitude=4pt},xshift=-2pt,yshift=0pt] (0,0) -- (0,3) node [black,midway,xshift=-10pt] {\footnotesize $3h$};
\draw [decorate,decoration={brace,amplitude=2pt,mirror},xshift=0pt,yshift=-2pt] (0,0) -- (1,0) node [black,midway,yshift=-8pt] {\footnotesize $h$};
\draw [decorate,decoration={brace,amplitude=2pt,mirror},xshift=0pt,xshift=2pt] (10,0) -- (10,1) node [black,midway,xshift=6pt] {\footnotesize $h$};
\draw [decorate,decoration={brace,amplitude=2pt,mirror},xshift=0pt,xshift=2pt] (10,2) -- (10,3) node [black,midway,xshift=6pt] {\footnotesize $h$};
\draw [decorate,decoration={brace,amplitude=4pt},xshift=0pt,yshift=3pt] (0,3) -- (10,3) node [black,midway,yshift=10pt] {\footnotesize $l$};
\begin{scope}[shift={(12,0)}]
\fill [gray, opacity=0.2] (0,0) -- (0,3) -- (10,3) -- (10,0) -- (0,0);
\draw (0,0) -- (0,3) -- (10,3) -- (10,0) -- (0,0);
\draw [dashed] (1,-0.75) -- (1,3.75);
\draw (1,3.5) node [right] {\footnotesize $\de H_j$};
\draw [<-] (0,3.45) -- (0.75,3.45);
\draw (0.5,3.4) node [above] {\footnotesize $\nu_j$};
\draw [dashed, very thin] (1,1) -- (10,1);
\draw [dashed, very thin] (1,2) -- (10,2);
\end{scope}
\end{tikzpicture}
\caption{The set $C\subset\R^2$ (on the left) and its convex hull (on the right).}
\label{fig:C_shape}
\end{figure}
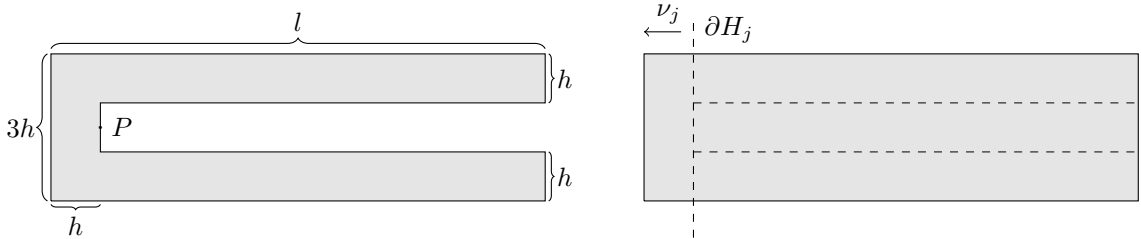

\begin{example}[The set $C\subset\R^2$]
\label{exa:impro_2}
Let $l>h>0$ and consider the set $C\subset\R^2$ in \cref{fig:C_shape}. 
We can compute
\begin{align*}
\Haus^1(\de C)
=
4l+4h,
\quad
\Haus^1(\de(\co(C)))
=
2l+6h,
\quad
\diam(\co(C))
=
\sqrt{l^2+9h^2}.
\end{align*}
Since $C$ is not convex, we must have that $k_{\min}(C)\ge2$.   
After all, it is evident that $k_{\min}(C)=3$.
Our argument will give such right value for a larger class of parameters $l>h>0$ than the one provided in~\cite{CGLP19}*{Example~3.1}.
First of all, notice that we do not deduce any further information from the result in~\cite{LL08}.
Indeed, inequality~\eqref{eq:leonetti_deficit} only yields
\begin{equation*}
k_{\min}(C)
\ge
\ceil*{
\frac{
\Haus^1(\de C)
}{
\Haus^1(\de(\co(C)))
}
}=2,
\end{equation*}
since an elementary computation shows that
\begin{equation*}
\frac{
\Haus^1(\de C)
}{
\Haus^1(\de(\co(C)))
}
=
\frac{
2l+2h
}{
l+3h
}
\in(1,2)
\end{equation*}
whenever $l>h>0$.
We now consider the point $P\in\de C$ as shown in \cref{fig:C_shape}. For every decomposition of $C$ into convex bodies, there exists a convex body $E_j$ containing~$P$. 
Since $E_j$ is convex and contained in $C$, we must have that $E_j\subset H_j$, where $H_j$ is the half-space such that $\de H_j$ contains the face of $C$ to which the point $P$ belongs, see \cref{fig:C_shape}.
Consequently, we must have
\begin{align*}
h(\co(C)\cap H_j,\co(C))=l-h,
\quad
\Haus^1(\co(C)\cap\de H_j)=3h,
\quad
\rho_{\nu_j}(\co(C))=\frac{3h}2,	
\end{align*}
where $\nu_j\in\mathbb S^1$ is the inner unit normal of the half-space $H_j$ as in \cref{fig:C_shape}.
Now let $l>0$ be fixed.
In \cite{CGLP19}, it has been shown that, for any $\alpha\in (0,1)$, $p=1$ and $h\ll l$, one has
\begin{equation*}
\ceil*{
\frac{
\Haus^1(\de C)+\tfrac{4\alpha^2}{1+\sqrt{1+4\alpha^2}}\,\diam(\co(C))
}{
\Haus^1(\de(\co(C)))
}
}
=3.
\end{equation*}
We now apply inequality~\eqref{eq:k_min_quantit_2} and  \cref{res:main} with 
\begin{equation*}
p=1,
\quad
\alpha=\frac{l-h}{\sqrt{l^2+9h^2}},
\quad
\beta=0.
\end{equation*}
We claim that we can choose $h\in(0,l)$ such that
\begin{equation*}
\ceil*{
\frac{
\Haus^1(\de C)+\tfrac{4\alpha^2}{1+\sqrt{1+4\alpha^2}}\,\diam(\co(C))
}{
\Haus^1(\de(\co(C)))
}
}
=2
\end{equation*}
and 
\begin{equation*}
\ceil*{ 
\frac{
\Haus^{1}(\de C)
+
\tfrac{
2\alpha^2
\diam(\co(C))^2
}
{
\rho_{\nu_j}(\co(C))+\sqrt{\rho_{\nu_j}(\co(C))^2
+
\alpha^2\diam(\co(C))^2}}
}
{\Haus^{1}(\de(\co(C)))}
}
=3.
\end{equation*}
In order to have both the claimed inequalities, it is sufficient to find $h\in (0,l)$ such that
\begin{equation*}
\frac{
\Haus^1(\de C)+\tfrac{4\alpha^2}{1+\sqrt{1+4\alpha^2}}\,\diam(\co(C))
}{
\Haus^1(\de(\co(C)))
}\le 2<
\frac{
\Haus^{1}(\de C)
+
\tfrac{
2\alpha^2
\diam(\co(C))^2
}
{
\rho_{\nu_j}(\co(C))+\sqrt{\rho_{\nu_j}(\co(C))^2
+
\alpha^2\diam(\co(C))^2}}
}
{\Haus^{1}(\de(\co(C)))},
\end{equation*}
that is,
\begin{equation*}
\frac{
2(l+h)+\tfrac{2\alpha^2}{1+\sqrt{1+4\alpha^2}}\,\sqrt{l^2+9h^2}
}{
l+3h
}\le 2<
\frac{
2(l+h)
+
\tfrac{
2\alpha^2
(l^2+9h^2)
}
{
3h+\sqrt{9h^2
+
4\alpha^2(l^2+9h^2)}
}}
{l+3h}.
\end{equation*}
Up to some elementary algebraic computations, we need to find $h\in(0,l)$ such that
\begin{equation*}
\frac{
(l-h)^2
}{
3h+\sqrt{9h^2+4(l-h)^2}
}
>2h\ge
\frac{
(l-h)^2
}{
\sqrt{l^2+9h^2}
+
\sqrt{l^2+9h^2+4(l-h)^2}
}.
\end{equation*}
If we let $h=tl$ for $t\in(0,1)$, then we just need to solve
\begin{equation*}
\begin{cases}
1-5t^2-2t-2t\sqrt{9t^2+4(1-t)^2}>0\\[2mm]
2t\sqrt{1+9t^2}+2t\sqrt{1+9t^2+4(1-t)^2}-1-t^2+2t\ge0
\end{cases}
\end{equation*}
and we let the reader check that the above system of inequalities admits solutions. 
\end{example}

\subsection{Some examples in \texorpdfstring{$\R^3$}{Rˆ3}}

We now give some examples in $\R^3$ showing that for $n=3$ our \cref{res:main} provides an improvement of the inequality~\eqref{eq:leonetti_deficit} established in~\cite{LL08}.

\begin{figure}[H]
\begin{tikzpicture}[scale=0.65]
\fill [gray, opacity=0.2] (0,0) -- (0,2) -- (0.5,2.5) -- (10.5,2.5) -- (10.5,1.5) -- (10,1) -- (1.5,1) -- (1.5,0.5) -- (1,0);

\draw (0,0) -- (0,2) -- (10,2) -- (10,1) -- (1,1) -- (1,0) -- (0,0); 
\draw (0,2) -- (0.5,2.5) -- (10.5,2.5);
\draw (10,2) -- (10.5,2.5);	
\draw (10,1) -- (10.5,1.5);
\draw (10.5,1.5) -- (10.5,2.5);
\draw (1,0) -- (1.5,0.5);
\draw (1.5,0.5) -- (1.5,1);
\draw [dashed] (1,1) -- (1.5,1.5);
\draw [dashed] (1.5,1.5) -- (10.5,1.5);
\draw [dashed] (1.5,1) -- (1.5,1.5);
\draw [dashed] (0,0) -- (0.5,0.5);
\draw [dashed] (0.5,0.5) -- (1.5,0.5);
\draw [dashed] (0.5,0.5) -- (0.5,2.5);
\draw [decorate,decoration={brace,amplitude=3pt},xshift=-2pt,yshift=0pt] (0,0) -- (0,2) node [black,midway,xshift=-10pt] {\footnotesize $2h$};
\draw [decorate,decoration={brace,amplitude=2pt,mirror},xshift=0pt,yshift=-2pt] (0,0) -- (1,0) node [black,midway,yshift=-8pt] {\footnotesize $h$};
\draw [decorate,decoration={brace,amplitude=2pt,mirror},xshift=0pt,xshift=2pt,yshift=-1pt] (1,0) -- (1.5,0.5) node [black,midway,xshift=5pt,yshift=-5pt] {\footnotesize $h$};
\draw [decorate,decoration={brace,amplitude=2pt,mirror},xshift=0pt,xshift=2pt,yshift=-1pt] (10,1) -- (10.5,1.5) node [black,midway,xshift=5pt,yshift=-5pt] {\footnotesize $h$};
\draw [decorate,decoration={brace,amplitude=2pt,mirror},xshift=0pt,xshift=2pt] (10.5,1.5) -- (10.5,2.5) node [black,midway,xshift=6pt] {\footnotesize $h$};
\draw [decorate,decoration={brace,amplitude=4pt},xshift=0pt,yshift=17pt] (0.5,2) -- (10.5,2) node [black,midway,yshift=10pt] {\footnotesize $l$};
\draw (1.25,0.75) node {.};	
\draw (1.25,0.85) node [below] {\footnotesize $P$};
\begin{scope}[shift={(12,0)}]
\fill [gray, opacity=0.2] (1,-0.75) -- (1,3) -- (1.5,3.55) -- (1.5,-0.25) -- (1,-0.75);
\fill [gray, opacity=0.2] (0,0) -- (0,2) -- (0.5,2.5) -- (10.5,2.5) -- (10.5,1.5) -- (10,1) -- (1,0);
\draw (0,0) -- (0,2) -- (10,2) -- (10,1)  -- (1,0) -- (0,0); 
\draw (0,2) -- (0.5,2.5) -- (10.5,2.5);
\draw (10,2) -- (10.5,2.5);	
\draw (10,1) -- (10.5,1.5);
\draw (10.5,1.5) -- (10.5,2.5);
\draw [dashed] (1.5,0.5) -- (10.5,1.5);
\draw [dashed] (0,0) -- (0.5,0.5);
\draw [dashed] (1,0) -- (1.5,0.5);
\draw [dashed] (1,2) -- (1.5,2.5);
\draw [dashed] (0.5,0.5) -- (1.5,0.5);
\draw [dashed] (0.5,0.5) -- (0.5,2.5);
\draw [<-] (0,3) -- (0.85,3);
\draw (0.5,3.4) node {\footnotesize $\nu_j$};
\draw [dashed] (1,-0.75) -- (1,3) -- (1.5,3.55) -- (1.5,-0.25) -- (1,-0.75);
\draw (2.25,2.5) node [right,above] {\footnotesize $\de H_j$};
\end{scope}
\end{tikzpicture}
\caption{The set $L\subset\R^3$ (on the left) and its convex hull (on the right).}
\label{fig:L_shape}
\end{figure}
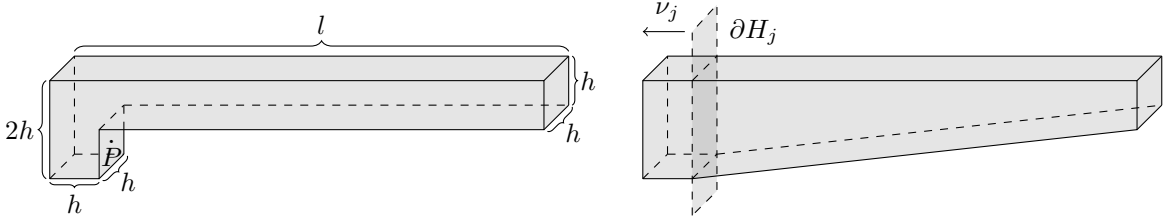

\begin{example}[The set $L\subset\R^3$]
\label{exa:L_shape}
Let $l>h>0$ and consider the set $L\subset\R^3$ in \cref{fig:L_shape}.
We can compute
\begin{align*}
\Haus^2(\de L)
&=
4hl+6h^2,
\\
\Haus^2(\de(\co(L)))
&=
4hl+5h^2+h\sqrt{(l-h)^2+h^2},
\\
\diam(\co(L))
&=
\sqrt{l^2+5h^2}.
\end{align*}
Since $L$ is not convex, we must have that $k_{\min}(L)\ge2$, and a simple geometric argument allows to conclude that $k_{\min}(L)=2$.
From~\eqref{eq:leonetti_deficit} we deduce that
\begin{equation*}
k_{\min}(L)\ge
\ceil*{
\frac{\Haus^{2}(\de L)}{\Haus^{2}(\de(\co(L)))}
}=1,
\end{equation*}
since an elementary computation shows that
\begin{equation*}
\frac{\Haus^{2}(\de L)}{\Haus^{2}(\de(\co(L)))}
=
\frac{
4l+6h
}{
4l+5h+\sqrt{(l-h)^2+h^2}
}
\in(0,1)
\end{equation*}
whenever $l>h>0$.
We now consider the point $P\in\de L$ as shown in \cref{fig:L_shape}. 
For every decomposition of $L$ into convex bodies, there exists a convex body $E_j$ containing~$P$.
Since $E_j$ is convex and contained in $L$, we must have that $E_j\subset H_j$, where $H_j$ is the half-space such that $\de H_j$ contains the face of $L$ to which the point $P$ belongs, see \cref{fig:L_shape}.
Consequently, we must have
\begin{align*}
h(\co(L)\cap H_j,\co(L))=l-h,
\quad
\Haus^2(\co(L)\cap\de H_j)=2h^2,
\quad
\rho_{\nu_j}(\co(L))=\sqrt{\frac{2h^2}{\pi}},	
\end{align*}
where $\nu_j\in\mathbb S^2$ is the inner unit normal of the half-space $H_j$ as in \cref{fig:L_shape}.
We now let $l>0$ be fixed.
We apply \cref{res:main} with
\begin{align*}
p=1,
\quad
\alpha=\frac{l-h}{\sqrt{l^2+5h^2}},
\quad
\beta=1.
\end{align*}
Provided that we choose $h\in(0,l)$ sufficiently small, we conclude that
\begin{align*}
k_{\min}(L)
&\ge
\ceil*{
\frac{
4hl+6h^2
+
\pi\left(\tfrac{l-h}{\sqrt{l^2+5h^2}}\right)^2
\tfrac{
\sqrt{\tfrac{2h^2}{\pi}}
\,
\left(\displaystyle\sqrt{l^2+5h^2}\right)^2
}
{
\sqrt{\tfrac{2h^2}{\pi}}
+
\sqrt{\tfrac{2h^2}{\pi}
+
\left(\tfrac{l-h}{\sqrt{l^2+5h^2}}\right)^2
\,
\left(\displaystyle\sqrt{l^2+5h^2}\right)^2}}
}
{4hl+5h^2+h\sqrt{(l-h)^2+h^2}}
}
\\
&=
\ceil*{
\frac{
4l+6h
+
\tfrac{\sqrt{2\pi}\,(l-h)^2}
{
\sqrt{\tfrac{2h^2}{\pi}}
+
\sqrt{\tfrac{2h^2}{\pi}
+
(l-h)^2}
}
}
{4l+5h+\sqrt{(l-h)^2+h^2}}
}
=2,
\end{align*} 
since
\begin{equation*}
\lim_{h\to0^+}
\frac{
4l+6h
+
\tfrac{\sqrt{2\pi}\,(l-h)^2}
{
\sqrt{\tfrac{2h^2}{\pi}}
+
\sqrt{\tfrac{2h^2}{\pi}
+
(l-h)^2}
}
}
{4l+5h+\sqrt{(l-h)^2+h^2}}
=
\frac{4+\sqrt{2\pi}}{5}
\in(1,2).
\end{equation*}
\end{example}

\begin{figure}[H]
\begin{tikzpicture}[scale=0.65]
\fill [gray, opacity=0.2] (0,0) -- (0,3) -- (1.25,4.75) -- (11.25,4.75) -- (11.25,2.75) -- (10,1) -- (1,0);
\fill[gray, opacity=0.2] (1,1) -- (1,2) -- (9,2); 
\draw (0,0) -- (0,3) -- (10,3) -- (10,1) -- (1,0) -- (0,0);
\draw (1,1) -- (1,2) -- (9,2) -- (1,1);
\draw (0,3) -- (1.25,4.75);
\draw (10,3) -- (11.25,4.75);	
\draw (10,1) -- (11.25,2.75);
\draw [dashed] (1.25,1.75) -- (1.25,4.75);
\draw (1.25,4.75) -- (11.25,4.75) -- (11.25,2.75);
\draw [dashed] (0,0) -- (1.25,1.75);
\draw [dashed] (1,0) -- (2.25,1.75);
\draw [dashed] (1,2) -- (2.25,3.75) -- (10.25,3.75) -- (9,2);
\draw (1,1) -- (1.7,2);
\draw[dashed] (1.7,2) -- (2.25,2.75);
\draw[dashed] (2.25,2.75) -- (2.25,3.75);
\draw[dashed] (2.25,2.75) -- (10.25,3.75);
\draw[dashed] (1.25,1.75) -- (2.25,1.75) -- (11.25,2.75);
\draw [decorate,decoration={brace,amplitude=3pt},xshift=-2pt,yshift=0pt] (0,0) -- (0,3) node [black,midway,xshift=-10pt] {\footnotesize $3h$};
\draw [decorate,decoration={brace,amplitude=2pt},xshift=-1pt,yshift=1pt] (0,3) -- (1.25,4.75) node [black,midway,xshift=-8pt,yshift=8pt] {\footnotesize $4h$};
\draw [decorate,decoration={brace,amplitude=2pt,mirror},xshift=0pt,yshift=-2pt] (0,0) -- (1,0) node [black,midway,xshift=0pt,yshift=-8pt] {\footnotesize $h$};
\draw [decorate,decoration={brace,amplitude=5pt},xshift=0pt,yshift=2pt] (1.25,4.75) -- (11.25,4.75) node [black,midway,xshift=0pt,yshift=12pt] {\footnotesize $l$};
\draw [decorate,decoration={brace,amplitude=3pt},xshift=2pt,yshift=0pt] (11.25,4.75) -- (11.25,2.75) node [black,midway,xshift=10pt,yshift=0pt] {\footnotesize $2h$};
\draw [decorate,decoration={brace,amplitude=2pt},xshift=-1pt,yshift=0pt] (1,1) -- (1,2) node [black,midway,xshift=-5pt,yshift=0pt] {\footnotesize $h$};
\draw [decorate,decoration={brace,amplitude=2.5pt,mirror},xshift=0pt,yshift=-2pt] (9,2) -- (9.97,2) node [black,midway,xshift=0pt,yshift=-7.5pt] {\footnotesize $h$};
\draw (1.75,2.5) node {.};
\draw (2,2.75) node {\footnotesize $P$};
\begin{scope}[shift={(12,0)}]
\fill [gray, opacity=0.2] (0,0) -- (0,3) -- (1.25,4.75) -- (11.25,4.75) -- (11.25,2.75) -- (10,1) -- (1,0);
\draw (0,0) -- (0,3) -- (10,3) -- (10,1) -- (1,0) -- (0,0);
\draw (10,3) -- (11.25,4.75);	
\draw (10,1) -- (11.25,2.75);
\draw (0,3) --(1.25,4.75);
\draw [dashed] (1.25,1.75) -- (1.25,4.75);
\draw (1.25,4.75) -- (11.25,4.75) -- (11.25,2.75);
\draw [dashed] (0,0) -- (1.25,1.75);
\draw [dashed] (1,0) -- (2.25,1.75);
\draw[dashed] (1.25,1.75) -- (2.25,1.75) -- (11.25,2.75);
\draw[dashed] (1,3) -- (2.25,4.75);
\fill[gray,opacity=0.2] (1,-1) -- (1,4) -- (2.25,5.75) -- (2.25,0.5) -- (1,-1);
\draw[dashed] (1,-1) -- (1,4) -- (2.25,5.75) -- (2.25,0.5) -- (1,-1);
\draw (3,4.65) node [right,above] {\footnotesize $\de H_j$};
\draw [<-] (0.5,5) -- (1.5,5);
\draw (1,5.4) node {\footnotesize $\nu_j$};
\end{scope}
\end{tikzpicture}
\caption{The set $D\subset\R^3$ (on the left) and its convex hull (on the right).}
\label{fig:D_shape}
\end{figure}

\begin{example}[The set $D$ in $\R^3$]
\label{exa:D_shape}
Let $l>2h>0$ and consider the set $D\subset\R^3$ in \cref{fig:D_shape}.
We can compute
\begin{align*}
\Haus^2(\de D)
&=
12 lh
+
4 h \sqrt{(l-h)^2+h^2}
+
4 h \sqrt{(l-2h)^2+h^2}
+
23 h^2,
\\
\Haus^{2}(\de(\co(D)))
&=
9 lh
+
4 h \sqrt{(l-h)^2+h^2}
+
25 h^2,
\\
\diam(\co(D))
&=
\sqrt{l^2+25 h^2}.
\end{align*}
Since $D$ is not convex, we must have that $k_{\min}(D)\ge2$, and a simple geometric argument allows to conclude that $k_{\min}(D)=3$. 
From~\eqref{eq:leonetti_deficit} we deduce that
\begin{equation*}
k_{\min}(D)\ge
\ceil*{
\frac{\Haus^{2}(\de D)}{\Haus^{2}(\de(\co(D)))}
}
=2,
\end{equation*}
since an elementary computation shows that
\begin{equation*}
\frac{\Haus^{2}(\de D)}{\Haus^{2}(\de(\co(D)))}
=
\frac{
12 l
+
4 \sqrt{(l-h)^2+h^2}
+
4 \sqrt{(l-2h)^2+h^2}
+
23 h
}{
9 l
+
4 \sqrt{(l-h)^2+h^2}
+
25 h
}
\in(1,2)
\end{equation*}
whenever $l>2h>0$.
We now consider the point $P\in\de D$ as shown in \cref{fig:D_shape}.
For every decomposition of $D$ into convex bodies, there exists a convex body $E_j$ containing~$P$.
Since $E_j$ is convex and contained in $D$, we must have that $E_j\subset H_j$, where $H_j$ is the half-space such that $\de H_j$ contains the face of $D$ to which the point $P$ belongs, see \cref{fig:D_shape}. 
Consequently, we must have
\begin{equation*}
h(\co(D)\cap H_j,\co(D))
=
l-h,
\quad
\Haus^2(\co(D)\cap\de H_j)
=
12 h^2,
\quad
\rho_{\nu_j}(\co(D))
=
\sqrt{\frac{12 h^2}\pi},
\end{equation*}
where $\nu_j\in\mathbb S^2$ is the inner unit normal of the half-space $H_j$ as in \cref{fig:D_shape}.
We now let $l>0$ be fixed.
We apply \cref{res:main} with
\begin{align*}
p=1,
\quad
\alpha
=
\frac{l-h}{\sqrt{l^2+25h^2}},
\quad
\beta=1.
\end{align*}
Provided that we choose $h\in\left(0,\frac l2\right)$ sufficiently small, we conclude that
\begin{align*}
k_{\min}(D)
&\ge
\ceil*{
\frac{
\Haus^2(\de D)
+
\pi\left(\frac{l-h}{\sqrt{l^2+25 h^2}}\right)^2
\frac{
\sqrt{\frac{12 h^2}{\pi}}
\,
\left(\sqrt{l^2+25 h^2}\right)^2
}{
\sqrt{\frac{12 h^2}{\pi}}
+
\sqrt{\frac{12 h^2}{\pi}
+
\left(\frac{l-h}{\sqrt{l^2+25 h^2}}\right)^2
\left(\sqrt{l^2+25 h^2}\right)^2
}
}
}{
\Haus^2(\de(\co(D)))
}
}
\\
&=
\ceil*{
\frac{
12 l
+
4 \sqrt{(l-h)^2+h^2}
+
4 \sqrt{(l-2h)^2+h^2}
+
23 h
+
\frac{
\sqrt{12\pi}\,(l-h)^2
}{
\sqrt{\frac{12 h^2}{\pi}}
+
\sqrt{\frac{12 h^2}{\pi}
+
(l-h)^2
}
}
}{
9 l
+
4 \sqrt{(l-h)^2+h^2}
+
25 h
}
}
=3,
\end{align*}
since 
\begin{align*}
\lim_{h\to0^+}
&\frac{
12 l
+
4 \sqrt{(l-h)^2+h^2}
+
4 \sqrt{(l-2h)^2+h^2}
+
23 h
+
\frac{
\sqrt{12\pi}\,(l-h)^2
}{
\sqrt{\frac{12 h^2}{\pi}}
+
\sqrt{\frac{12 h^2}{\pi}
+
(l-h)^2
}
}
}{
9 l
+
4 \sqrt{(l-h)^2+h^2}
+
25 h
}
\\
&=
\frac{20+\sqrt{12\pi}}{13}
\in(2,3).
\end{align*}
\end{example}

\begin{figure}[H]
\begin{tikzpicture}[scale=0.65]
\fill [gray, opacity=0.2] (0,0) -- (0,2) -- (0.5,2.5) -- (1.5,2.5) -- (1.5,1.5) -- (10.5,1.5) -- (10.5,0.5) -- (10,0);
\fill [gray, opacity=0.2] (9,1.5) -- (9,2) -- (9.5,2.5) -- (10.5,2.5) -- (10.5,1.5);
\draw (0,0) -- (0,2) -- (1,2) -- (1,1) -- (9,1) -- (9,2) -- (10,2) -- (10,0) -- (0,0);
\draw (0,2) -- (0.5,2.5) -- (1.5,2.5) -- (1.5,1.5) -- (9,1.5);
\draw (1,1) -- (1.5,1.5);
\draw (1,2) -- (1.5,2.5);
\draw (9,2) -- (9.5,2.5) -- (10.5,2.5) -- (10.5,0.5) -- (10,0);
\draw (10,2) -- (10.5,2.5);
\draw [dashed] (0,0) -- (0.5,0.5) -- (0.5,2.5);
\draw [dashed] (0.5,0.5) -- (10.5,0.5);
\draw [dashed] (9.5,2.5) -- (9.5,1.5) -- (9,1);
\draw [dashed] (9,1.5) -- (9.5,1.5);
\draw [decorate,decoration={brace,amplitude=3pt},xshift=-2pt,yshift=0pt] (0,0) -- (0,2) node [black,midway,xshift=-10pt] {\footnotesize $2h$};
\draw [decorate,decoration={brace,amplitude=2pt},xshift=-1pt,yshift=1pt] (0,2) -- (0.5,2.5) node [black,midway,xshift=-8pt,yshift=5pt] {\footnotesize $h$};
\draw [decorate,decoration={brace,amplitude=2pt},xshift=0pt,yshift=2pt] (0.5,2.5) -- (1.5,2.5) node [black,midway,xshift=0pt,yshift=8pt] {\footnotesize $h$};
\draw [decorate,decoration={brace,amplitude=5pt,mirror},xshift=0pt,yshift=-2pt] (0,0) -- (10,0) node [black,midway,xshift=0pt,yshift=-12pt] {\footnotesize $l$};
\draw [decorate,decoration={brace,amplitude=2pt,mirror},xshift=2pt,yshift=0pt] (1.5,1.5) -- (1.5,2.5) node [black,midway,xshift=6pt,yshift=0pt] {\footnotesize $h$};
\draw (1.25,1.75) node {.};
\draw (1.25,1.45) node {\footnotesize $P$};
\draw (9.25,1.75) node {.};
\draw (9,1.45) node {\footnotesize $Q$};
\begin{scope}[shift={(12,0)}]
\fill [gray, opacity=0.2] (0,0) -- (0,2) -- (0.5,2.5) -- (10.5,2.5) -- (10.5,0.5) -- (10,0);
\fill [gray, opacity=0.2] (1,-1) -- (1,3) -- (1.5,3.5) -- (1.5,-0.5);
\fill [gray, opacity=0.2] (9,-1) -- (9,3) -- (9.5,3.5) -- (9.5,-0.5);
\draw (0,0) -- (0,2) -- (10,2) -- (10,0) -- (0,0);
\draw (0,2) -- (0.5,2.5) -- (10.5,2.5) -- (10.5,0.5) -- (10,0);
\draw (10,2) -- (10.5,2.5);
\draw [dashed] (0,0) -- (0.5,0.5) -- (10.5,0.5);
\draw [dashed] (0.5,0.5) -- (0.5,2.5);
\draw [dashed] (1,-1) -- (1,3) -- (1.5,3.5) -- (1.5,-0.5) -- (1,-1);
\draw [dashed] (1,0) -- (1.5,0.5);
\draw [dashed] (1,2) -- (1.5,2.5);
\draw [dashed] (9,-1) -- (9,3) -- (9.5,3.5) -- (9.5,-0.5) -- (9,-1);
\draw [dashed] (9,0) -- (9.5,0.5);
\draw [dashed] (9,2) -- (9.5,2.5);
\draw (2.25,2.5) node [right,above] {\footnotesize $\de H_j$};
\draw [<-] (0,3) -- (0.85,3);
\draw (0.5,3.4) node {\footnotesize $\nu_j$};
\end{scope}
\end{tikzpicture}
\caption{The set $U\subset\R^3$ (on the left) and its convex hull (on the right).}
\label{fig:U_shape}
\end{figure}

\begin{example}[The set $U$ in $\R^3$]
Let $l>3h>0$ and consider the set $U\subset\R^3$ in \cref{fig:U_shape}.
We can compute
\begin{align*}
\Haus^2(\de U)
&=
4hl+10h^2,\ 
\Haus^2(\de(\co(U)))
=
6hl+4h^2,\ 
\diam(\co(U))
=
\sqrt{l^2+5h^2}.
\end{align*}
Since $U$ is not convex, we must have that $k_{\min}(U)\ge2$, and a simple geometric argument allows to conclude that $k_{\min}(U)=3$.
From~\eqref{eq:leonetti_deficit} we deduce that
\begin{equation*}
k_{\min}(U)\ge
\ceil*{
\frac{\Haus^{2}(\de U)}{\Haus^{2}(\de(\co(U)))}
}=1,
\end{equation*}
since an elementary computation shows that
\begin{equation*}
\frac{\Haus^{2}(\de U)}{\Haus^{2}(\de(\co(U)))}
=
\frac{
4l+10h
}{
6l+4h
}
\in(0,1)
\end{equation*}
whenever $l>3h>0$.
We now consider the points $P,Q\in\de U$ as shown in \cref{fig:U_shape}. 
For every decomposition of $U$ into convex bodies, there exists two convex bodies $E_j$ and $E_k$ containing~$P$ and~$Q$ respectively.
Since the segment $PQ$ is not contained in $U$, it follows that $E_j$ cannot contain~$Q$. 
Since $E_j$ is convex and contained in $U$, we must have that $E_j\subset H_j$, where $H_j$ is the half-space such that $\de H_j$ contains the face of $U$ to which the point $P$ belongs, see \cref{fig:U_shape}.
Consequently, we must have
\begin{align*}
h(\co(U)\cap H_j,\co(U))=l-h,
\quad
\Haus^2(\co(U)\cap\de H_j)=2h^2,
\quad
\rho_{\nu_j}(\co(U))=\sqrt{\frac{2h^2}{\pi}},	
\end{align*}
where $\nu_j\in\mathbb S^2$ is the inner unit normal of  the half-space $H_j$ as in \cref{fig:U_shape}.
By the symmetry of~$U$,  a similar argument can be used for the convex component $E_k$ containing~$Q$.
We now let $l>0$ be fixed.
We apply \cref{res:main} with
\begin{align*}
p=2,
\quad
\alpha=\frac{l-h}{\sqrt{l^2+5h^2}},
\quad
\beta=1.
\end{align*}
Provided that we choose $h\in(0,\frac l3)$ sufficiently small, we conclude that
\begin{align*}
k_{\min}(U)
&\ge
\ceil*{
\frac{
4hl+10h^2
+
2\pi\left(\tfrac{l-h}{\sqrt{l^2+5h^2}}\right)^2
\tfrac{
\sqrt{\tfrac{2h^2}{\pi}}
\,
\left(\displaystyle\sqrt{l^2+5h^2}\right)^2
}
{
\sqrt{\tfrac{2h^2}{\pi}}
+
\sqrt{\tfrac{2h^2}{\pi}
+
\left(\tfrac{l-h}{\sqrt{l^2+5h^2}}\right)^2
\,
\left(\displaystyle\sqrt{l^2+5h^2}\right)^2}}
}
{6hl+4h^2}
}
\\
&=
\ceil*{
\frac{
4l+10h
+
\tfrac{2\sqrt{2\pi}\,(l-h)^2}
{
\sqrt{\tfrac{2h^2}{\pi}}
+
\sqrt{\tfrac{2h^2}{\pi}
+
(l-h)^2}
}
}
{6l+4h}
}
=2,
\end{align*} 
since
\begin{equation*}
\lim_{h\to0^+}
\frac{
4l+10h
+
\tfrac{2\sqrt{2\pi}\,(l-h)^2}
{
\sqrt{\tfrac{2h^2}{\pi}}
+
\sqrt{\tfrac{2h^2}{\pi}
+
(l-h)^2}
}
}
{6l+4h}
=
\frac{4+2\sqrt{2\pi}}{6}
\in(1,2).
\end{equation*}
The above computations prove that, in this case, although the lower bound given by~\eqref{eq:main_k_est} is strictly better than the one given by~\eqref{eq:leonetti_deficit}, the inequality~\eqref{eq:main_k_est} is not sharp.    
\end{example}

\subsection{An example in \texorpdfstring{$\R^n$}{Rˆn}}

We conclude this section with \cref{exa:L_23_shape} below, showing that for all $n\ge3$ our \cref{res:main} provides an improvement of the inequality~\eqref{eq:leonetti_deficit} established in~\cite{LL08}.
In \cref{exa:L_23_shape} we will need to apply the following result, whose elementary proof is detailed below for the reader's convenience.

\begin{lemma}\label{res:cilindro}
Let $\ell\in(0,+\infty)$ and let $Q\subset\R^2$ be a set with 
\begin{equation*}
\Haus^1(\de Q)<+\infty
\quad\text{and}\quad 
\Haus^2(Q)<+\infty.
\end{equation*}
If 
$E_n=Q\times[0,\ell]^{n-2}\subset\R^n$, then
\begin{equation}
\label{eq:coarea}
\Haus^{n-1}(\de E_n)
=
\ell^{n-2}\,\Haus^1(\de Q)
+
2(n-2)\,\ell^{n-3}\,\Haus^2(Q)
\end{equation}	
for all $n\ge2$.
\end{lemma}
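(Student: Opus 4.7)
The plan is to prove the formula by induction on $n\ge2$, exploiting the recursive structure $E_{n+1} = E_n \times [0,\ell]$. The base case $n=2$ is immediate, since $E_2 = Q$ and the inductive formula reduces to $\Haus^1(\de Q) = \ell^0\,\Haus^1(\de Q) + 0\cdot\Haus^2(Q)$.

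For the inductive step, I would first observe that the topological boundary of a cylindrical product decomposes as
\begin{equation*}
\de(E_n \times [0,\ell]) = \bigl(\de E_n \times [0,\ell]\bigr) \cup \bigl(E_n \times \{0,\ell\}\bigr),
\end{equation*}
where the two pieces meet in a set of vanishing $\Haus^n$-measure (their intersection sits in the lower-dimensional set $\de E_n \times \{0,\ell\}$). Applying the product formula for Hausdorff measures to each piece separately, together with $\Haus^0(\{0,\ell\}) = 2$, yields
\begin{equation*}
\Haus^{n}(\de E_{n+1}) = \ell\,\Haus^{n-1}(\de E_n) + 2\,\Haus^{n}(E_n).
\end{equation*}
A direct Fubini-type computation also gives $\Haus^{n}(E_n) = \ell^{n-2}\,\Haus^2(Q)$. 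Substituting the inductive hypothesis and simplifying the coefficients
\begin{equation*}
\ell\,\bigl(\ell^{n-2}\Haus^1(\de Q) + 2(n-2)\ell^{n-3}\Haus^2(Q)\bigr) + 2\ell^{n-2}\Haus^2(Q) = \ell^{n-1}\Haus^1(\de Q) + 2(n-1)\ell^{n-2}\Haus^2(Q)
\end{equation*}
produces exactly the formula~\eqref{eq:coarea} at level $n+1$.

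An alternative, non-inductive route would partition $\de E_n$ once and for all into the lateral part $\de Q \times [0,\ell]^{n-2}$ and the $2(n-2)$ "cap" faces of the form $Q \times F$, where $F$ ranges over the $(n-3)$-dimensional faces of $\de([0,\ell]^{n-2})$; each cap contributes $\ell^{n-3}\,\Haus^2(Q)$ and the lateral piece contributes $\ell^{n-2}\,\Haus^1(\de Q)$, again by the product rule. There is no real obstacle: the only point requiring a mild justification is the validity of the identity $\Haus^{p+q}(A\times B) = \Haus^p(A)\,\Haus^q(B)$ when $B$ is a cube (or a face of a cube) and $A$ is $\Haus^p$-measurable with finite measure, which is standard in geometric measure theory and fits the hypotheses of the lemma.
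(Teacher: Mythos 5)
Your proposal is correct and follows essentially the same route as the paper: induction on the cylindrical structure $E_n=E_{n-1}\times[0,\ell]$, the same decomposition of the boundary into lateral part and caps, the same recursion $\Haus^{n}(\de E_{n+1})=\ell\,\Haus^{n-1}(\de E_n)+2\,\Haus^{n}(E_n)$, and the volume identity $\Haus^n(E_n)=\ell^{n-2}\Haus^2(Q)$ (the paper invokes the coarea formula where you invoke the product rule for Hausdorff measures, but the computation is identical). The non-inductive variant you sketch is a harmless repackaging of the same decomposition.
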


\begin{proof}
By definition, the set $E_n\subset\R^n$ satisfies
\begin{equation}
\label{eq:vol_prod}
\Haus^n(E_n)=\ell^{n-2}\,\Haus^2(Q).
\end{equation}
Moreover, since we can recursively write $E_n=E_{n-1}\times[0,\ell]$ and thus
\begin{equation*}
\de E_n 
=
((\de E_{n-1})\times [0,\ell])
\,\cup\,
(E_{n-1}\times\set*{0,\ell}),
\end{equation*}
by the coarea formula we can compute
\begin{equation*}
\Haus^{n-1}(\de E_n)
=
2\Haus^{n-1}(E_{n-1})
+
\ell\,\Haus^{n-2}(\de E_{n-1})
\end{equation*}
for all $n\ge2$.
The validity of~\eqref{eq:coarea} can thus be checked by induction, thanks to~\eqref{eq:vol_prod}.
\end{proof}

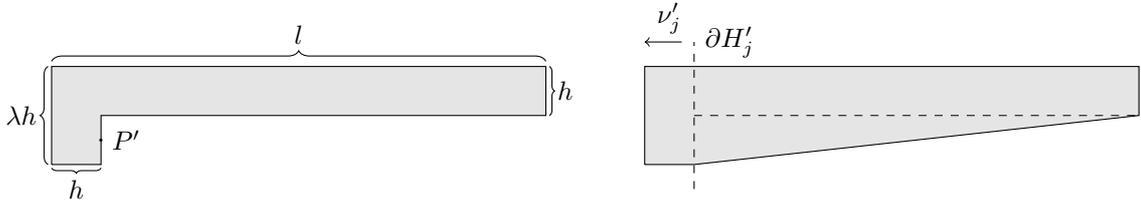
\begin{figure}[H]
\begin{tikzpicture}[scale=0.65]
\fill [gray, opacity=0.2] (0,0) -- (0,2) -- (10,2) -- (10,1) -- (1,1) -- (1,0) -- (0,0);
\draw (0,0) -- (0,2) -- (10,2) -- (10,1) -- (1,1) -- (1,0) -- (0,0); 
\draw (1,0.5) node {.};
\draw (1,0.5) node [right] {\footnotesize $P'$};
\draw [decorate,decoration={brace,amplitude=3pt},xshift=-2pt,yshift=0pt] (0,0) -- (0,2) node [black,midway,xshift=-10pt] {\footnotesize $\lambda h$};
\draw [decorate,decoration={brace,amplitude=2pt,mirror},xshift=0pt,yshift=-2pt] (0,0) -- (1,0) node [black,midway,yshift=-8pt] {\footnotesize $h$};
\draw [decorate,decoration={brace,amplitude=2pt,mirror},xshift=0pt,xshift=2pt] (10,1) -- (10,2) node [black,midway,xshift=6pt] {\footnotesize $h$};
\draw [decorate,decoration={brace,amplitude=4pt},xshift=0pt,yshift=3pt] (0,2) -- (10,2) node [black,midway,yshift=10pt] {\footnotesize $l$};
\begin{scope}[shift={(12,0)}]
\fill [gray, opacity=0.2] (0,0) -- (0,2) -- (10,2) -- (10,1) -- (1,0) -- (0,0);
\draw (0,0) -- (0,2) -- (10,2) -- (10,1) -- (1,0) -- (0,0);
\draw [dashed] (1,-0.5) -- (1,2.5);
\draw (1,2.5) node [right] {\footnotesize $\de H_j'$};
\draw [<-] (0,2.5) -- (.75,2.5);
\draw (.5,2.45) node [above] {\footnotesize $\nu_j'$};
\draw [dashed, very thin] (1,1) -- (10,1);
\end{scope}
\end{tikzpicture}
\caption{The body $L_2\subset\R^2$ (on the left) and its convex hull (on the right).}
\label{fig:L_23_shapes}
\end{figure}

\begin{example}[The set $L_n\subset\R^n$ for $n\ge3$]
\label{exa:L_23_shape}
Let $l>h>0$ and $\lambda>1$ and consider the set $L_n=L_2\times[0,h]^{n-2}\subset\R^n$ for $n\ge3$, where $L_2\subset\R^n$ is the set in \cref{fig:L_23_shapes}.
Note that
\begin{align*}
\Haus^1(\de L_2)
=
2l+2\lambda h,
\quad
\Haus^2(L_2)
=
h\,\big(l+(\lambda-1)h\big)
\end{align*}
and, similarly,
\begin{align*}
\Haus^1(\de(\co(L_2)))
&=
l+\sqrt{(l-h)^2+(\lambda-1)^2 h^2}+(\lambda+2)h,
\\
\Haus^2(\co(L_2))
&=\frac h2\,\big((\lambda+1)l+(\lambda-1)h\big).
\end{align*}
Since 
$\co(L_n)=\co(L_2)\times[0,h]^{n-2}$,
we can apply \cref{res:cilindro} to compute
\begin{align*}
\Haus^{n-1}(\de L_n)
&=
2h^{n-2}\big((n-1)l+((n-1)\lambda-n+2))h\big),
\\
\Haus^{n-1}(\de(\co(L_n)))
&=
h^{n-2}\big(((n-2)\lambda+n-1)l+\sqrt{(l-h)^2+(\lambda-1)^2h^2}
\\
&\quad+
((n-1)\lambda-n+4)h\big),
\\
\diam(\co(L_n))
&=
\sqrt{l^2+(\lambda^2+n-2)h^2}
\end{align*}
for all $n\ge3$.
Note that $L_n$ is not convex, so we must have that $k_{\min}(L_n)\ge2$ for all~$n\ge3$.
In fact, a simple geometric decomposition proves that $k_{\min}(L_n)=2$ for all~$n\ge3$.
We now consider the point $P=(P',0)\in L_n$, where $P'\in\de L_2$ is shown in \cref{fig:L_23_shapes}.
For every decomposition of $L_n$ into convex bodies, there exists a convex body $E_j$ containing~$P$.
Since $E_j$ is convex and contained in $L_n$, we must have that its projection $E_j'=\mathsf{P}_{\R^2}(E_j)$ is a convex body contained in $L_2\cap H_j'$, where $\mathsf{P}_{\R^2}\colon\R^n\to\R^2$ is the canonical projection onto the first two coordinates and $H_j'$ is the half-plane such that $\de H_j'$ contains the face of $L_2$ to which the point $P$ belongs, see \cref{fig:L_23_shapes}.
Therefore, we must have that 
$E_j\subset H_j$,
where $H_j$ is the half-space $H_j=\mathsf{P}_{\R^2}^{-1}(H_j')\subset\R^n$. 
Consequently, we must have 
\begin{equation*}
h(\co(L_n)\cap H_j,\co(L_n))
=
l-h,
\quad
\Haus^{n-1}(\co(L_n)\cap\de H_j)
=
\lambda h^{n-1},
\quad
\rho_{\nu_j}(\co(L_n))=\sqrt[n-1]{\frac{\lambda h^{n-1}}{\omega_{n-1}}},
\end{equation*} 
where $\nu_j\in\mathbb S^{n-1}$ is the inner unit normal of the half-space $H_j$ (precisely, $\nu_j=(\nu_j',0)$, where $\nu_j'$ is the inner unit normal of $H_j'$, see \cref{fig:L_23_shapes}).
We now let $l>0$ be fixed.
We apply \cref{res:main} with
\begin{equation*}
p=1,
\quad
\alpha=\frac{l-h}{\sqrt{l^2+(\lambda^2+n-2)h^2}},
\quad
\beta=1.
\end{equation*} 
We are going to choose $\lambda>1$ as a dimensional constant and $h\in(0,l)$ sufficiently small. 
Indeed, for any given $\lambda>1$, we have that 
\begin{align*}
\lim_{h\to0^+}
\frac{
\Haus^{n-1}(\de L_n)
}{
\Haus^{n-1}(\de(\co(L_n)))
}
=
\frac{
2n-2
}{
(n-2)\lambda+n
}
\end{align*}
and, similarly,
\begin{align*}
\lim_{h\to0^+}
\frac{
\Haus^{n-1}(\de L_n)
+
\omega_{n-1}\alpha^2\beta^{\frac{n-2}{n-1}}
\tfrac{\rho_{\nu_j}(\co(L_n))^{n-2}\diam(\co(L_n))^2}{\rho_{\nu_j}(\co(L_n))+\sqrt{\rho_{\nu_j}(\co(E))^2+\alpha^2\diam(\co(L_n))^2}}
}{
\Haus^{n-1}(\de(\co(L_n)))
}
=
\frac{
2n-2
+c_n\lambda^{\frac{n-2}{n-1}}
}{
(n-2)\lambda+n
},
\end{align*}
where $c_n=\omega_{n-1}^{\frac1{n-1}}>0$ is a dimensional constant.
Since $\lambda>1$, we have that
\begin{equation*}
\frac{
2n-2
}{
(n-2)\lambda+n
}
<1
\quad
\text{for all}\ n\ge3.
\end{equation*}
On the other hand, we obviously have
\begin{equation*}
\frac{
2n-2
+c_n\lambda^{\frac{n-2}{n-1}}
}{
(n-2)\lambda+n
}
>1
\iff
\lambda^{\frac{n-2}{n-1}}
>\frac{n-2}{c_n}(\lambda-1)
\end{equation*}
and it is possible to verify that the last inequality admits solutions in the interval $(1,+\infty)$.
Consequently, for each $n\ge3$ we can find $\lambda_n\in(1,+\infty)$ such that
\begin{equation*}
\frac{
2n-2
+c_n\lambda_n^{\frac{n-2}{n-1}}
}{
(n-2)\lambda_n+n
}
>1.
\end{equation*}
Therefore, provided that we choose $\lambda=\lambda_n$ as above and $h\in(0,l)$ sufficiently small, we conclude that the set $L_n\subset\R^n$ corresponding to these choices of parameters satisfies
\begin{equation*}
\ceil*{
\frac{
\Haus^{n-1}(\de L_n)
}{
\Haus^{n-1}(\de(\co(L_n)))
}
}
=1
\end{equation*}
and
\begin{equation*}
\ceil*{
\frac{
\Haus^{n-1}(\de L_n)
+
\omega_{n-1}\alpha^2\beta^{\frac{n-2}{n-1}}
\tfrac{\rho_{\nu_{i_j}}(\co(L_n))^{n-2}\diam(\co(L_n))^2}{\rho_{\nu_j}(\co(L_n))+\sqrt{\rho_{\nu_j}(\co(E))^2+\alpha^2\diam(\co(L_n))^2}}
}{
\Haus^{n-1}(\de(\co(L_n)))
}
}
=2.
\end{equation*}
\end{example}


\begin{bibdiv}
\begin{biblist}

\bib{A04}{collection}{
   author={Archimedes},
   title={The works of Archimedes. Vol. I},
   publisher={Cambridge University Press, Cambridge},
   date={2004},
}

\bib{BF87}{book}{
   author={Bonnesen, T.},
   author={Fenchel, W.},
   title={Theory of convex bodies},
   publisher={BCS Associates, Moscow, ID},
   date={1987},
   pages={x+172}
}

\bib{BFK95}{article}{
   author={Buttazzo, Giuseppe},
   author={Ferone, Vincenzo},
   author={Kawohl, Bernhard},
   title={Minimum problems over sets of concave functions and related questions},
   journal={Math. Nachr.},
   volume={173},
   date={1995},
   pages={71--89}
}

\bib{CGLP15}{article}{
   author={Carozza, Menita},
   author={Giannetti, Flavia},
   author={Leonetti, Francesco},
   author={Passarelli di Napoli, Antonia},
   title={A sharp quantitative estimate for the perimeters of convex sets in the plane},
   journal={J. Convex Anal.},
   volume={22},
   date={2015},
   number={3},
   pages={853--858}
}

\bib{CGLP16}{article}{
   author={Carozza, Menita},
   author={Giannetti, Flavia},
   author={Leonetti, Francesco},
   author={Passarelli di Napoli, Antonia},
   title={A sharp quantitative estimate for the surface areas of convex sets
   in $\mathbb{R}^3$},
   journal={Atti Accad. Naz. Lincei Rend. Lincei Mat. Appl.},
   volume={27},
   date={2016},
   number={3},
   pages={327--333},
}

\bib{CGLP19}{article}{
   author={Carozza, Menita},
   author={Giannetti, Flavia},
   author={Leonetti, Francesco},
   author={Passarelli di Napoli, Antonia},
   title={Convex components},
   journal={Commun. Contemp. Math.},
   volume={21},
   date={2019},
   number={6},
   pages={1850036, 10},
}

\bib{Gia}{article}{
  author={Giannetti, Flavia},
   title={Sharp geometric quantitative estimates},
   journal={Atti Accad. Naz. Lincei Rend. Lincei Mat. Appl.},
   volume={28},
   date={2017},
   number={1},
   pages={1--6},
}

\bib{G07}{book}{
   author={Gruber, Peter M.},
   title={Convex and discrete geometry},
   series={Grundlehren der Mathematischen Wissenschaften [Fundamental
   Principles of Mathematical Sciences]},
   volume={336},
   publisher={Springer, Berlin},
   date={2007},
   pages={xiv+578}
}

\bib{LL08}{article}{
   author={La Civita, Marianna},
   author={Leonetti, Francesco},
   title={Convex components of a set and the measure of its boundary},
   journal={Atti Semin. Mat. Fis. Univ. Modena Reggio Emilia},
   volume={56},
   date={2008/09},
   pages={71--78}
}

\bib{M12}{book}{
   author={Maggi, Francesco},
   title={Sets of finite perimeter and geometric variational problems},
   series={Cambridge Studies in Advanced Mathematics},
   volume={135},
   note={An introduction to geometric measure theory},
   publisher={Cambridge University Press, Cambridge},
   date={2012},
   pages={xx+454}
}

\bib{S18}{article}{
   author={Stefani, Giorgio},
   title={On the monotonicity of perimeter of convex bodies},
   journal={J. Convex Anal.},
   volume={25},
   date={2018},
   number={1},
   pages={93--102},
}

\end{biblist}
\end{bibdiv}

\end{document}